\numberwithin{equation}{section}
\newtheorem{definition}{Definition}[section]
\newtheorem{theorem}{Theorem}[section]
\newtheorem{corollary}[theorem]{Corollary}
\newtheorem{lemma}{Lemma}[section]
\title{\bf The analytic criterion of strict copositivity for a 4th-order 3-dimensional tensor\thanks{This work was supported by the National Natural Science Foundation of P.R. China (Grant No.12171064), by The team project of innovation leading talent in chongqing (No.CQYC20210309536) and by the Foundation of Chongqing Normal university (20XLB009).}}
\author{Mingjung Sheng, Yisheng Song\thanks{Corresponding author: yisheng.song@cqnu.edu.cn}\\
	{\normalsize School of Mathematical Sciences, Chongqing Normal University} \\
	{\normalsize Chongqing, 401331, P.R. China.}\\
	{\normalsize mingjun\_2001@163.com(Sheng); yisheng.song@cqnu.edu.cn (Song)}
}
\begin{document}
\date{ }
\maketitle

\begin{abstract}
This paper focuses on the strict copositivity analysis of 4th-order 3-dimensional symmetric tensors.
A necessary and sufficient condition is  provided for the strict copositivity of a fourth-order symmetric tensor.
Subsequently, building upon this conclusion, we  discuss the strict copositivity of  fourth-order three-dimensional symmetric tensors with its entries  $\pm 1, 0$, and further build  their necessary and sufficient conditions. Utilizing these theorems, we can effectively verify the strict copositivity of a general fourth-order three-dimensional symmetric tensors.

\vskip 12pt \noindent {\bf Key words:} {Strictly copositive tensors, Symmetric tensors, 4th order Tensor}

\end{abstract}

\section{Introduction}

Tensors represent a significant concept in mathematics, serving as a generalization of vectors and matrices. Recently, the copositivity of tensors has garnered considerable attention due to its importance in polynomial optimization \cite{qi2005eigenvalues,qi2013symmetric,nie2018complete,qi2018tensor,song2015necessary}, hypergraph theory\cite{chen2018copositive,nie2018complete,wang2024copositive}, complementarity problems\cite{huang2019tensor1,qi2019tensor,huang2019tensor2,song2016properties,song2017strictly,song2018structural}, and particle physics\cite{kannike2012vacuum,kannike2016vacuum,chen2018copositive,song2022copositivity,song2023co,song2023vacuum}, among others. A notable application is the evaluation of vacuum stability in scalar dark matter models\cite{belanger2012impact,belanger2014minimal,kannike2016vacuum,kannike2012vacuum}, which can be assessed through the co-positivity of the corresponding tensor. Kannike\cite{ivanov2020yet} demonstrated that the copositivity of tensors serves as a sufficient condition for the boundedness from below of scalar potentials, thereby laying the groundwork for subsequent research, including the analysis of vacuum stability in $\mathbb{Z}_3$ scalar dark matter models \cite{song2022copositivity}. Thus, the development of copositive tensor theory has provided valuable insights into the vacuum stability of scalar dark matter models\cite{kannike2016vacuum,ivanov2012discrete,ivanov2013classification}.

The study of copositive matrices dates back to Motzkin's work in 1952\cite{motzkin1952copositive}, and Baumert\cite{baumert1966extreme} explored extremal copositive quadratic forms. Cottle et al.\cite{cottle1970classes} contributed to the foundational knowledge by classifying copositive matrices. Subsequent researchers such as Simpson-Spector\cite{simpson1982copositive}, Hadeler\cite{hadeler1983copositive}, Nadler\cite{nadler1992nonnegativity}, Chang-Sederberg\cite{chang1994nonnegative}, and Andersson Chang-Elfving\cite{andersson1995criteria} have elucidated the (strict) copositivity conditions for $2 \times 2$ and $3 \times 3$ matrices, providing essential support for the study of higher-order tensor copositivity. In 2013, Qi\cite{qi2013symmetric} introduced the concept of copositive tensors, extending the notion of copositive matrices, establishing their fundamental properties, and indicating that symmetric non-negative tensors and semi-positive definite tensors are copositive. Song-Qi\cite{song2015necessary} made a significant contribution in 2015 by proposing necessary and sufficient conditions for tensor copositivity, proving that the necessary and sufficient condition for a symmetric tensor to be (strictly) copositive is that none of its principal sub-tensors possess (non-positive) negative eigenvalues. In 2016, Song-Qi\cite{song2016eigenvalue} introduced the concepts of Pareto H-eigenvalues and Pareto Z-eigenvalues, linking these concepts with tensor copositivity. Song-Qi\cite{song2021analytical} also associated tensor complementarity problems with copositive tensors, facilitating the development of methods for solving complementarity problems arising in particle physics. Qi-Chen-Chen\cite{qi2018tensor} further advanced the theory of tensor eigenvalues and its applications in 2018, offering a comprehensive framework for analyzing copositive tensors.

Recently, Liu-Song\cite{liu2022copositivity} derived sufficient conditions for the copositivity of third-order symmetric tensors and demonstrated their applicability in $\mathbb{Z}_3$ scalar dark matter. Building on this, Song-Li\cite{song2022copositivity} presented necessary and sufficient conditions for the copositivity of fourth-order symmetric tensors, contributing to the verification of vacuum stability in Higgs scalar potential models, a critical aspect of particle physics. Song-Liu\cite{song4866847copositivity}  proposed analytical necessary and sufficient conditions for the (strict) copositivity of fourth-order three-dimensional symmetric tensors with entries of $1$ or $-1$, enabling the validation of the copositivity of a general fourth-order three-dimensional tensor. However, an explicit expression for the copositivity of higher-order tensors remains elusive.

In this paper, inspired by the works of Hoffman, Alan J., and Francisco Pereira \cite{hoffman1973copositiv}, Liu-Song\cite{liu2022copositivity} , Song-Li\cite{song2022copositivity}, Song-Liu\cite{song4866847copositivity}, and related studies, it is straightforward to obtain a necessary and sufficient conditions for the strict copositivity of fourth-order two-dimensional symmetric tensors. We propose a sufficient and necessary condition for the strict copositivity of a fourth-order symmetric tensor, followed by a specific case involving fourth-order three-dimensional symmetric tensors with entries of $1$ or $-1$, refining the theory established in \cite{song4866847copositivity}. Finally, we discuss the strict copositivity of special fourth-order three-dimensional symmetric tensors with entries of $-1, 0$, or $1$, aiming to provide a more comprehensive understanding of tensor copositivity.

\section{Preliminaries and Basic Facts}

\begin{definition}
 An $m$th-order $n$-dimensional symmetric tensor $\mathcal{T}= (t_{i_1 i_2 ...i_m})$ is called
\begin{itemize}
	\item[(i)] \textbf{copositive}  \cite{qi2013symmetric} if
	$\mathcal{T} x^m = \sum\limits_{i_1,i_2,\dots ,i_m=1}^{n} t_{i_1 i_2...i_m} x_{i_1}x_{i_2}...x_{i_m} \geq 0$ for all nonegative vector $x=(x_1,x_2,...,x_n)^T$;
	\item[(ii)]   \textbf{strictly  copositive} \cite{qi2013symmetric} if $\mathcal{T} x^m >0$ for all nonegative and nonzero vector $x=(x_1,x_2,...,x_n)^T$;
	\item[(iii)]  \textbf{positive   (semi)-definite} \cite{qi2005eigenvalues} if $\mathcal{T} x^m \geq(>) 0$ for all nonzero vector $x \in \mathbb{R}^n$ and an even positive integer $m$.
\end{itemize}
\end{definition}
\begin{lemma}\label{lem:21} \cite{qi2013symmetric} Suppose an $m$th-order $n$-dimensional symmetric tensor	$\mathcal{T}= (t_{i_1 i_2 ...i_m})$ is copositive. If $t_{ii\cdots i}=0$, then $t_{ii\cdots ij}\ge0$ for all $j.$
\end{lemma}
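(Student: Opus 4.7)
The plan is to test the copositivity inequality against a vector supported on only two coordinates, and then isolate the coefficient $t_{ii\cdots ij}$ by a one-sided limit. The case $j=i$ is immediate since the conclusion becomes $t_{ii\cdots i}=0\ge 0$, so I assume $j\ne i$.

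First I would choose the test vector $x\in\mathbb{R}^n$ with $x_i=1$, $x_j=t$ for some $t\ge 0$, and $x_k=0$ for all other $k$. By symmetry of $\mathcal{T}$, every index tuple in $\{i,j\}^m$ with the same number of $j$-entries gives the same coefficient; let $T_k$ denote the common value of $t_{i_1\cdots i_m}$ when exactly $k$ of the indices equal $j$ and the remaining $m-k$ equal $i$. Expanding $\mathcal{T}x^m$ on this two-dimensional support gives
\begin{equation*}
\mathcal{T}x^m \;=\; \sum_{k=0}^{m}\binom{m}{k}\,T_k\,t^{k},
\end{equation*}
where by hypothesis $T_0=t_{ii\cdots i}=0$, and $T_1=t_{ii\cdots ij}$ is the entry we wish to sign.

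Since $x\ge 0$, copositivity of $\mathcal{T}$ gives $\mathcal{T}x^m\ge 0$ for all $t\ge 0$. With the constant term killed, I would divide through by $t>0$ to obtain
\begin{equation*}
m\,t_{ii\cdots ij} + \sum_{k=2}^{m}\binom{m}{k}\,T_k\,t^{k-1} \;\ge\; 0,
\end{equation*}
and then let $t\to 0^{+}$. The higher-order terms vanish and leave $m\,t_{ii\cdots ij}\ge 0$, which is the required inequality. There is no genuine obstacle here: the only substantive step is the multinomial bookkeeping for the two-coordinate expansion, and the vanishing diagonal entry is used precisely to strip the otherwise dominant $t^{0}$ term so that $t_{ii\cdots ij}$ emerges as the leading coefficient. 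Note that strict copositivity is not needed; ordinary copositivity along a single ray in the nonnegative orthant suffices.
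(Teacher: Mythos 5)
Your proof is correct, and the paper itself gives no proof of this lemma --- it is quoted directly from Qi's 2013 paper, where the standard argument is exactly the one you give: restrict $\mathcal{T}x^m$ to the two-coordinate ray $x_i=1$, $x_j=t$, use $t_{ii\cdots i}=0$ to kill the constant term, divide by $t$, and let $t\to 0^{+}$ to isolate the coefficient $m\,t_{ii\cdots ij}$. Nothing is missing; the multinomial bookkeeping and the limit step are both sound.
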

The (strictly) copositive conditions  of $2\times2$ symmetric matrices were    showed by Andersson-Chang-Elfving \cite{andersson1995criteria}, Chang-Sederberg \cite{chang1994nonnegative}, Hadeler \cite{hadeler1983copositive} and Nadler \cite{nadler1992nonnegativity}, Simpson-Spector \cite{simpson1982copositive}.
\begin{lemma}\label{lem:22} Let $M=(m_{ij})$ be a symmetric $2\times2$ matrix. Then $M$ is (strictly) copositive if and only if
	$$
		m_{11} \geq 0 (>0), m_{22} \geq 0 (>0),\alpha=m_{12}+\sqrt{m_{11}m_{22}}\geq 0 (>0).$$
\end{lemma}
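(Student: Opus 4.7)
The plan is to reduce the question to a one-variable minimization by regarding $f(x_1,x_2)=m_{11}x_1^2+2m_{12}x_1x_2+m_{22}x_2^2$ as a quadratic in $x_1$ with parameter $x_2\ge 0$. Necessity of $m_{11}\ge 0$ and $m_{22}\ge 0$ (respectively $>0$ in the strict case) is immediate by testing at the standard basis vectors $(1,0)^{T}$ and $(0,1)^{T}$. To handle the off-diagonal condition, I would first dispose of the trivial subcase $m_{12}\ge 0$, where $f$ is a nonnegative combination of $x_1^2, x_1x_2, x_2^2$ on the nonnegative orthant, and one automatically has $\alpha=m_{12}+\sqrt{m_{11}m_{22}}\ge 0$ (strictly positive if $m_{11},m_{22}>0$).

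The heart of the argument is the case $m_{12}<0$. Assuming first $m_{11}>0$, complete the square to write
\[
f(x_1,x_2)=m_{11}\Bigl(x_1+\tfrac{m_{12}}{m_{11}}x_2\Bigr)^{2}+\frac{m_{11}m_{22}-m_{12}^{2}}{m_{11}}\,x_2^{2}.
\]
Because $m_{12}<0$ and $x_2\ge 0$, the minimizer $x_1^{\star}=-\frac{m_{12}}{m_{11}}x_2$ is itself nonnegative, hence it is attained inside the feasible region, and the minimum value equals $\frac{m_{11}m_{22}-m_{12}^{2}}{m_{11}}x_2^{2}$. Thus $f\ge 0$ on the nonnegative orthant iff $m_{11}m_{22}\ge m_{12}^{2}$, equivalently $\sqrt{m_{11}m_{22}}\ge -m_{12}$, equivalently $\alpha\ge 0$; and $f>0$ for nonzero nonnegative $(x_1,x_2)$ iff the inequality is strict, equivalently $\alpha>0$.

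It remains to handle the boundary case $m_{11}=0$ (for strict copositivity this case is already excluded). Here Lemma~\ref{lem:21} applied to the $m=2$ matrix setting forces $m_{12}\ge 0$ under copositivity, so automatically $\alpha=m_{12}\ge 0$; conversely if $\alpha\ge 0$ and $m_{11}=0$ then $m_{12}\ge 0$, and $f=2m_{12}x_1x_2+m_{22}x_2^{2}\ge 0$. The main obstacle I anticipate is purely bookkeeping: making sure the completed-square minimizer lies in the nonnegative orthant (which relies precisely on the sign hypothesis $m_{12}<0$) and tracking the strict versus non-strict inequalities through the equivalence $m_{11}m_{22}-m_{12}^{2}\ge 0\iff \alpha\ge 0$ when $m_{12}<0$.
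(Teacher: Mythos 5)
The paper does not prove Lemma~\ref{lem:22} at all: it is quoted as a classical result with citations to Andersson--Chang--Elfving, Chang--Sederberg, Hadeler, Nadler, and Simpson--Spector, so there is no in-paper argument to compare against. Your proposal is a correct, self-contained proof of the standard kind. The necessity of the diagonal conditions from the basis vectors, the dismissal of the case $m_{12}\ge 0$, and the completing-the-square reduction for $m_{12}<0$, $m_{11}>0$ are all sound; the key observation that the unconstrained minimizer $x_1^{\star}=-\tfrac{m_{12}}{m_{11}}x_2$ is feasible precisely because $m_{12}<0$ is exactly what makes the constrained minimum equal $\tfrac{m_{11}m_{22}-m_{12}^{2}}{m_{11}}x_2^{2}$, and the equivalence $m_{11}m_{22}\ge m_{12}^{2}\iff\alpha\ge 0$ is valid since both $\sqrt{m_{11}m_{22}}$ and $-m_{12}$ are nonnegative there. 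The boundary case $m_{11}=0$ is handled correctly (you could equally avoid invoking Lemma~\ref{lem:21} by letting $x_1\to\infty$ with $x_2=1$ to see directly that $m_{12}<0$ destroys copositivity), and by symmetry the same covers $m_{22}=0$; the strict version tracks through without issue since it forces $m_{11},m_{22}>0$. What your approach buys over the paper's citation is a short elementary verification; what it does not need, and correctly does not attempt, is the simplex/barycentric machinery of the cited references, which is aimed at the harder $3\times 3$ case.
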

Schmidt-He$\beta$\cite{schmidt1988positivity}, Ulrich-Watson \cite{ulrich1994positivity} and Qi-Song-Zhang \cite{2022POSITIVITY} provided the analytic conditions for the nonnegativity of a quartic (cubic) and univariate polynomial in \(\mathbb{R}^+\). By applying these results, the copositive conditions of a 4th-order (3rd-order) 2-dimensional tensor were easily proved. Also see Song-Li \cite{song2022copositivity} and Liu-Song \cite{liu2022copositivity} for more details.
\begin{lemma}\label{lem:23}
Let $\mathcal{T} = (t_{ijkl})$ is a 4th-order 2-dimensional symmetric tensor with $t_{1111}>0$ and $t_{2222}>0$, then $\mathcal{T}$ is copositive if and only if
$$\begin{cases}
\Delta \leq 0, t_{1222} \sqrt{t_{1111}} + t_{1112} \sqrt{t_{2222}}>0; \\
t_{1222} \geq 0, t_{1112}\ge0, 3t_{1122}+\sqrt{t_{1111} t_{2222}} \geq0;\\
\Delta \geq 0,\\
|t_{1112} \sqrt{t_{2222}} - t_{1222} \sqrt{t_{1111}} |\leq \sqrt{6t_{1111} t_{1122} t_{2222} + 2t_{1111} t_{2222} \sqrt{t_{1111} t_{2222}}}\\
(i) - \sqrt{t_{1111} t_{2222}} \leq 3t_{1122} \leq 3\sqrt{t_{1111}t_{2222}};\\
(ii) t_{1122}>\sqrt{t_{1111}t_{2222}}~and\\
t_{1112} \sqrt{t_{2222}} + t_{1222} \sqrt{t_{1111}} \geq -\sqrt{6t_{1111}t_{1122}t_{2222}- 2t_{1111}t_{2222} \sqrt{t_{1111}t_{2222}}},
\end{cases} $$
where $\Delta = 4 \times 12^3 (t_{1111}t_{2222}-4t_{1112}t_{1222}+3t_{1122}^2)^3- 72^2 \times 6^2 (t_{1111}t_{1122} t_{2222} + 2t_{1112} t_{1122}t_{1222} - t_{1122}^3 -t_{1112}^2 t_{2222} -t_{1111} t_{1222}^2)^2$.
\end{lemma}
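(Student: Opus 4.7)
The plan is to reduce copositivity of the $4$th-order $2$-dimensional tensor $\mathcal{T}$ to the nonnegativity of a single univariate quartic on $[0,\infty)$, and then invoke the analytic positivity criteria for such quartics established in \cite{schmidt1988positivity,ulrich1994positivity,2022POSITIVITY}.

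Expand
\begin{equation*}
\mathcal{T}x^4 = t_{1111}x_1^4 + 4t_{1112}x_1^3 x_2 + 6t_{1122}x_1^2 x_2^2 + 4t_{1222}x_1 x_2^3 + t_{2222}x_2^4.
\end{equation*}
For any nonnegative $x=(x_1,x_2)^\top$, the case $x_2=0$ gives $\mathcal{T}x^4 = t_{1111}x_1^4\geq 0$ by the hypothesis $t_{1111}>0$; otherwise set $t=x_1/x_2\geq 0$ and factor out $x_2^4$ to obtain
\begin{equation*}
\mathcal{T}x^4 = x_2^4\,p(t), \qquad p(t) = t_{1111}t^4 + 4t_{1112}t^3 + 6t_{1122}t^2 + 4t_{1222}t + t_{2222}.
\end{equation*}
Thus $\mathcal{T}$ is copositive if and only if $p(t)\geq 0$ for all $t\geq 0$. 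A direct computation identifies $\Delta$ in the statement as $6912\,(I^3-27J^2)$ with $I=t_{1111}t_{2222}-4t_{1112}t_{1222}+3t_{1122}^2$ and $J=t_{1111}t_{1122}t_{2222}+2t_{1112}t_{1122}t_{1222}-t_{1122}^3-t_{1112}^2 t_{2222}-t_{1111}t_{1222}^2$, i.e.\ (up to a positive constant) the discriminant of the binomial quartic $p$, whose sign distinguishes whether $p$ has four real roots or a complex-conjugate pair.

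The three branches in the statement correspond to the three cases in the quartic-positivity theorem on $\mathbb{R}^+$. The branch $t_{1112}\geq 0$, $t_{1222}\geq 0$, $3t_{1122}+\sqrt{t_{1111}t_{2222}}\geq 0$ is the direct-inspection path: one bounds $p(t)\geq t_{1111}t^4+6t_{1122}t^2+t_{2222}$ and uses AM--GM, namely $t_{1111}t^4+t_{2222}\geq 2\sqrt{t_{1111}t_{2222}}\,t^2$, to absorb any negative contribution of $6t_{1122}t^2$. The branch $\Delta\leq 0$ is the all-real-root case, where the necessary and sufficient extra condition preventing a nonnegative real root of $p$ reduces, after the scale change $u=(t_{1111}/t_{2222})^{1/4}\,t$ that symmetrizes the quartic, to $t_{1112}\sqrt{t_{2222}}+t_{1222}\sqrt{t_{1111}}>0$. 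The branch $\Delta\geq 0$ is the complex-root case, and here the displayed absolute-value inequality together with the subcase split (i)/(ii) translates the location-of-critical-points test for $p'$ into explicit coefficient conditions.

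For each branch I would verify sufficiency by a direct sum-of-squares or AM--GM bound on $p(t)$, and necessity by exhibiting an explicit $t\geq 0$ at which $p(t)<0$ when the condition fails, taking $x=(t,1)^\top$ as the defeating test vector. The main obstacle will be the bookkeeping for the third branch: the position of $3t_{1122}$ relative to $\pm\sqrt{t_{1111}t_{2222}}$ governs whether the discriminant balance alone controls the sign of $p$ on $[0,\infty)$ (subcase (i)) or whether an additional lower bound on $t_{1112}\sqrt{t_{2222}}+t_{1222}\sqrt{t_{1111}}$ is needed to exclude a real root of $p$ in $[0,\infty)$ (subcase (ii)). A careful treatment of the boundary equalities $\Delta=0$ and of the overlap between branches will be required so that every condition is simultaneously necessary and sufficient with no case double-counted.
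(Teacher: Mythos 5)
Your reduction is exactly the route the paper intends: the paper does not actually prove Lemma \ref{lem:23}, but presents it as an immediate consequence of the analytic positivity criteria for a univariate quartic on $[0,\infty)$ due to Schmidt--He{\ss}, Ulrich--Watson and Qi--Song--Zhang \cite{schmidt1988positivity,ulrich1994positivity,2022POSITIVITY}, applied after precisely the substitution $t=x_1/x_2$ that you carry out. Your identification of $\Delta$ as $6912\,(I^3-27J^2)$ with the standard quartic invariants $I,J$ of $a t^4+4bt^3+6ct^2+4dt+e$ is also correct.

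As a proof, however, the proposal is incomplete: everything that makes the lemma nontrivial --- verifying that the three displayed branches are exactly the necessary and sufficient conditions for $p(t)\ge 0$ on $[0,\infty)$ --- is deferred (``I would verify\dots'', ``the main obstacle will be\dots''), and the third branch with its subcases (i) and (ii), which is the heart of the criterion, is never derived. There is also a substantive misstatement: you call $\Delta\le 0$ ``the all-real-root case'' and $\Delta\ge 0$ ``the complex-root case,'' but for a quartic the discriminant works the other way around: $\Delta<0$ means exactly two real roots and one complex-conjugate pair, while $\Delta>0$ means either four real roots or none. Starting the branch analysis from this mislabeled dichotomy would derail the case split. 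The clean completion is the one the paper implicitly takes: quote the quartic positivity theorem on $\mathbb{R}^+$ from \cite{ulrich1994positivity} or \cite{2022POSITIVITY} and check that its hypotheses translate into the displayed conditions under the identification $a=t_{1111}$, $b=t_{1112}$, $c=t_{1122}$, $d=t_{1222}$, $e=t_{2222}$, rather than re-deriving that theorem by ad hoc AM--GM and sum-of-squares estimates.
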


\begin{lemma} \label{lem:24} A 3rd order 2-demensional tensor $\mathcal{T} = (t_{ijk})$ is copositive if and only if $t_{111} \ge 0$, $t_{222} \ge 0$ and $$\begin{cases}
		t_{112} \ge 0, t_{122} \ge 0;\\
	\max\{t_{111}, t_{222}\}>0 \mbox{ and }	4t_{111}t_{122}^3 + 4 t_{112}^3t_{222} + t_{111}^2t_{222}^2 - 6t_{111}t_{112}t_{122}t_{222} - 3t_{112}^2t_{122}^2 \ge 0.
	\end{cases}$$
\end{lemma}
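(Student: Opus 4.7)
The plan is to reduce copositivity of $\mathcal{T}$ on the nonnegative orthant to the nonnegativity of a univariate cubic on $[0,\infty)$ and then apply the classical discriminant theory of real cubics. Writing
\[
 \mathcal{T}x^3 = t_{111}x_1^3 + 3t_{112}x_1^2x_2 + 3t_{122}x_1x_2^2 + t_{222}x_2^3
\]
and testing $x = (1,0)^\top$, $x = (0,1)^\top$ gives the necessary conditions $t_{111} \ge 0$ and $t_{222} \ge 0$. By homogeneity, the remaining inequality reduces (via $t = x_1/x_2$ with $x_2 > 0$) to requiring $f(t) := t_{111}t^3 + 3t_{112}t^2 + 3t_{122}t + t_{222} \ge 0$ for all $t \ge 0$. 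The first branch $t_{112}, t_{122} \ge 0$ is then immediate, since every coefficient of $f$ becomes nonnegative.

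For the second branch, the core algebraic ingredient I would establish first is the identity
\[
 \Delta_f = -27\bigl(4t_{111}t_{122}^3 + 4t_{112}^3t_{222} + t_{111}^2t_{222}^2 - 6t_{111}t_{112}t_{122}t_{222} - 3t_{112}^2t_{122}^2\bigr),
\]
obtained by substituting $(a,b,c,d) = (t_{111}, 3t_{112}, 3t_{122}, t_{222})$ into the standard cubic discriminant $\Delta_f = 18abcd - 4b^3d + b^2c^2 - 4ac^3 - 27a^2d^2$. Thus the inequality displayed in the second branch is exactly $\Delta_f \le 0$, which classically means $f$ has either one real root paired with a complex conjugate pair, or a real repeated root.

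For sufficiency I would assume $t_{111} > 0$ (the case $t_{111} = 0 < t_{222}$ being dual via $s = 1/t$, which sends $f$ to $g(s) = t_{222}s^3 + 3t_{122}s^2 + 3t_{112}s + t_{111}$ with the same discriminant expression). When $\Delta_f < 0$, factor $f(t) = t_{111}(t-r)(t^2+\alpha t+\beta)$ with the quadratic factor strictly positive on $\mathbb{R}$; then $f$ has the sign of $t-r$, and $f(0) = t_{222} \ge 0$ forces $r \le 0$, yielding $f \ge 0$ on $[0,\infty)$. The $\Delta_f = 0$ case splits into the factorizations $t_{111}(t-r)^3$ and $t_{111}(t-r_1)^2(t-r_2)$, and a parallel sign analysis using $t_{222} \ge 0$ together with the product structure locates the simple root in $(-\infty,0]$. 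For necessity, if branch~1 fails then the sub-case $t_{111} = t_{222} = 0$ would reduce $f(t) = 3t(t_{112}t+t_{122})$, whose nonnegativity on $[0,\infty)$ forces $t_{112}, t_{122} \ge 0$ and lands back in branch~1; hence $\max\{t_{111},t_{222}\} > 0$. Similarly, if $t_{111} > 0$ and $\Delta_f > 0$, then $f$ would have three distinct real roots, copositivity would force all of them $\le 0$, and expanding $f(t) = t_{111}(t+s_1)(t+s_2)(t+s_3)$ with $s_i \ge 0$ would yield $t_{112}, t_{122} \ge 0$—again contradicting the failure of branch~1—so $\Delta_f \le 0$.

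The main technical obstacle is the careful enumeration of root configurations under $\Delta_f = 0$, where one must rule out a double root sitting in $(0,\infty)$; this requires combining $t_{111} > 0$, $t_{222} \ge 0$, and the working assumption that branch~1 is not operative, together with Lemma \ref{lem:21} at boundary values where a diagonal entry vanishes (so that $t_{jjj} = 0$ in a copositive tensor forces $t_{jjk} \ge 0$). Beyond this bookkeeping, the remaining work is routine arithmetic: checking the discriminant identity above and performing the coefficient comparison in the expansion of $f$ as a product of linear factors to extract the sign information on $t_{112}$ and $t_{122}$.
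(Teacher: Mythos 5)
Your overall strategy---reducing copositivity to the nonnegativity of the univariate cubic $f(t)=t_{111}t^3+3t_{112}t^2+3t_{122}t+t_{222}$ on $[0,\infty)$ and invoking the cubic discriminant---is exactly the route the paper intends: the paper gives no proof of this lemma at all, merely citing Schmidt--He{\ss}, Ulrich--Watson, Qi--Song--Zhang and Liu--Song for the underlying univariate positivity criteria. Your discriminant identity $\Delta_f=-27\,E$ (with $E$ the expression appearing in the second branch) checks out, and your handling of the $\Delta_f<0$ case and of the necessity direction is sound.

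The genuine gap is in the sufficiency of the second branch when $\Delta_f=0$. You claim a ``parallel sign analysis using $t_{222}\ge0$'' locates the simple root in $(-\infty,0]$, and that the remaining obstacle is a double root sitting in $(0,\infty)$. Neither is accurate. In the factorization $f(t)=t_{111}(t-r_1)^2(t-r_2)$ the relation $f(0)=-t_{111}r_1^2r_2=t_{222}\ge0$ forces $r_2\le0$ only when $r_1\ne0$; a double root in $(0,\infty)$ is actually harmless, since it makes $r_1\ne 0$ and hence $r_2\le 0$. The dangerous configuration is a double root at the origin, i.e.\ $t_{222}=t_{122}=0$, where the simple root $r_2=-3t_{112}/t_{111}$ can be positive. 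Concretely, take $t_{111}=1$, $t_{112}=-1$, $t_{122}=t_{222}=0$: then $t_{111}\ge0$, $t_{222}\ge0$, $\max\{t_{111},t_{222}\}>0$ and $E=0\ge0$, so the second branch is satisfied, yet
\[
\mathcal{T}x^3=x_1^3-3x_1^2x_2=x_1^2(x_1-3x_2)
\]
is negative at $x=(1,1)^\top$. No amount of bookkeeping closes this case: the implication you are trying to prove fails there, so the degenerate subcase $t_{222}=t_{122}=0$ (and its mirror $t_{111}=t_{112}=0$) must be excluded by an additional hypothesis rather than absorbed into the proof. Note also that your proposed appeal to Lemma \ref{lem:21} cannot rescue sufficiency: that lemma presupposes copositivity, so invoking it while trying to establish copositivity is circular; it is only available to you in the necessity direction, where your argument already works without it.
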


By means of Lemmas \ref{lem:21}, \ref{lem:22}, \ref{lem:23} and \ref{lem:24}, the following lemma may be obtained.
\begin{lemma} \label{lem:25}
Let $\mathcal{T}$ be a 4th order 2-dimensional symmetric tensor with $t_{ijkl}\in\{-1,0,1\}$. Then $\mathcal{T}$ is copositive if and only if
$$\begin{cases}
	t_{1111}=1,  t_{2222}=0,   \begin{cases} t_{1112}\in\{0,1\},  t_{1122}\in\{0,1\}, t_{1222}\in\{0,1\};\\ t_{1222}=0, t_{1122}=-t_{1112}=1;\\   t_{1222}=1, t_{1122}=-1, t_{1112}\in\{0,1\};\end{cases}\\
	t_{1111}=0,  t_{2222}=1, \begin{cases}t_{1112}\in\{0,1\}, t_{1122}\in\{0,1\},  t_{1222}\in\{0,1\};\\t_{1112}=0, t_{1122}=- t_{1222}=1;\\ t_{1112}=1, t_{1122}=-1, t_{1222}\in\{0,1\};\end{cases}\\
	t_{1111}=t_{2222}=0,  \begin{cases} t_{1112}\in\{0,1\}, t_{1222}\in\{0,1\}, t_{1122}\in\{0,1\};\\ t_{1112}=t_{1222}=-t_{1122}=1;\end{cases}\\
	t_{1111}=t_{2222}=1, \begin{cases}	t_{1122}=0, t_{1112}\in\{0,1\}, t_{1222}\in\{0,1\};\\
		t_{1122}=1;\\  t_{1112}=t_{1222}=1.\end{cases}
\end{cases}$$
Moreover, $\mathcal{T}$ is strictly copositive if and only if
$$t_{1111}=t_{2222}=1, \begin{cases}
	t_{1122}=0, t_{1112}\in\{0,1\}, t_{1222}\in\{0,1\};\\
	t_{1112}=t_{1222}=1;\\
	t_{1112}t_{1222}\in\{0,-1\}\mbox{ and }  t_{1122}=1.
\end{cases}$$
\end{lemma}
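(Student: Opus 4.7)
My plan is to characterize (strict) copositivity via a case split on the diagonal entries $t_{1111}$ and $t_{2222}$. Since copositivity forces $t_{1111}=\mathcal{T}e_1^4\ge 0$ and $t_{2222}=\mathcal{T}e_2^4\ge 0$, combined with the entry constraint $t_{ijkl}\in\{-1,0,1\}$, this restricts $t_{1111},t_{2222}\in\{0,1\}$ and yields four principal cases. Whenever a diagonal entry vanishes, Lemma \ref{lem:21} forces the adjacent edge entry ($t_{1112}$ when $t_{1111}=0$, $t_{1222}$ when $t_{2222}=0$) to be non-negative and hence to lie in $\{0,1\}$, narrowing the remaining free entries considerably.

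In each case I would then factor $\mathcal{T}x^4$ and invoke an earlier lemma. When $t_{1111}=t_{2222}=0$, the form factors as $\mathcal{T}x^4=2x_1x_2\bigl(2t_{1112}x_1^2+3t_{1122}x_1x_2+2t_{1222}x_2^2\bigr)$, and its non-negativity on the non-negative orthant reduces via Lemma \ref{lem:22} to the copositivity of a $2\times 2$ symmetric matrix, yielding the stated list (in particular, $t_{1122}=-1$ is only allowed when $t_{1112}=t_{1222}=1$, so that $2\sqrt{t_{1112}t_{1222}}\ge \tfrac32|t_{1122}|$). When $t_{1111}=1$, $t_{2222}=0$ (and, symmetrically, $t_{1111}=0$, $t_{2222}=1$), a further split on $t_{1222}\in\{0,1\}$ gives two sub-cases: if $t_{1222}=0$ one factors $\mathcal{T}x^4=x_1^2\bigl(x_1^2+4t_{1112}x_1x_2+6t_{1122}x_2^2\bigr)$ and applies Lemma \ref{lem:22} to the bracketed quadratic; if $t_{1222}=1$ one factors $\mathcal{T}x^4=x_1\bigl(x_1^3+4t_{1112}x_1^2x_2+6t_{1122}x_1x_2^2+4x_2^3\bigr)$ and applies Lemma \ref{lem:24} to the cubic factor. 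Finally, for $t_{1111}=t_{2222}=1$ all diagonals are positive, so Lemma \ref{lem:23} applies directly; one evaluates its discriminant $\Delta$ and side conditions at each of the $27$ triples $(t_{1112},t_{1122},t_{1222})\in\{-1,0,1\}^3$ and retains those that pass.

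The strict copositivity clause then follows by strengthening all inequalities. First, $t_{1111}>0$ and $t_{2222}>0$ are necessary (test $x=e_1,e_2$), forcing $t_{1111}=t_{2222}=1$ and collapsing the analysis to the last principal case. Within the list produced there, one discards any triple for which $\mathcal{T}x^4$ vanishes at some non-zero non-negative $x$, either by inspecting boundary directions such as $x=e_i+te_j$ with $t>0$ or by checking the strict versions of the inequalities in Lemma \ref{lem:23}. The main obstacle is precisely the case $t_{1111}=t_{2222}=1$: Lemma \ref{lem:23} is a long disjunction involving a degree-$6$ discriminant $\Delta$, and although only $27$ triples must be inspected, organizing the verification so that every listed triple is certified (strictly) copositive and every excluded triple is ruled out requires careful bookkeeping. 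A secondary burden is ensuring that the sub-lists in the copositive cases and in the strictly copositive case cover all valid triples without duplication, and that the strict-versus-non-strict bookkeeping in the hybrid case $t_{1111}=1,\,t_{2222}=0,\,t_{1222}=1$ (where Lemma \ref{lem:24} provides only the non-strict reduction) is consistent with the final statement.
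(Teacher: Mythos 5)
Your proposal is correct and follows essentially the same route as the paper: the same four-way split on $t_{1111},t_{2222}\in\{0,1\}$, Lemma \ref{lem:21} to force the adjacent entries non-negative when a diagonal entry vanishes, the factorizations reducing to Lemma \ref{lem:22} (quadratic factor) and Lemma \ref{lem:24} (cubic factor), Lemma \ref{lem:23} with a finite check of triples when $t_{1111}=t_{2222}=1$, and strictness obtained by forcing $t_{1111}=t_{2222}=1$ and then verifying that $\mathcal{T}x^4=0$ has no non-zero non-negative solution (the paper does this by explicit decompositions such as $(x_1-x_2)^4+8x_1^3x_2$, isolating the failure case $t_{1112}=t_{1222}=-1$, $t_{1122}=1$ where $\mathcal{T}x^4=(x_1-x_2)^4$). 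Your only deviation --- sub-splitting the mixed case on $t_{1222}\in\{0,1\}$ so that Lemma \ref{lem:22} handles the $t_{1222}=0$ branch instead of feeding everything into Lemma \ref{lem:24} --- is a harmless reorganization of the same argument.
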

\begin{proof} Obviously,  the copositivity of $\mathcal{T}$ means $t_{1111}\in\{0,1\}$ and  $t_{2222}\in\{0,1\}$, and then,  it may divides into four different cases.
	
	\textbf{Case 1}. $t_{1111}=0,\ \ t_{2222}=1$, which implies $t_{1112}\ge0$ by Lemma \ref{lem:21}.  That's when $\mathcal{T}x^4$ can be rewritten as
	$$\aligned \mathcal{T}x^4=&4t_{1112}x_1^3x_2+6t_{1122}x_1^2x_2^2+4t_{1222}x_1x^3_2+x_2^4\\
	=&x_2(4t_{1112}x_1^3+6t_{1122}x_1^2x_2+4t_{1222}x_1x_2^2+x_2^3).
	\endaligned$$
	Which is equivalent to $$4t_{1112}x_1^3+3\times2t_{1122}x_1^2x_2+3\times\dfrac43t_{1222}x_1x_2^2+x_2^3\ge0.$$
	From Lemma \ref{lem:24},  it follows that $\mathcal{T}x^4\geq0$ if and only if
	$$ t_{1112}\in\{0,1\},\begin{cases}\mbox{ either }
		t_{1122}\in\{0,1\},\ t_{1222}\in\{0,1\};  \mbox{ or} \\
		4^2t_{1112}^2+4\times 2^3t_{1122}^3 +4^2 \times\left(\dfrac43\right)^3 t_{1222}^3 t_{1112}-3\times2^2\times\left(\dfrac43\right)^2t_{1122}^2 t_{1222}^2\\-6\times \dfrac43\times2\times4t_{1112} t_{1122} t_{1222} \ge0.\end{cases}$$
	If $t_{1112}=0$, then $t_{1122}\in\{0,1\},\ t_{1222}\in\{0,1\};  \mbox{ or} $
	$$t_{1122}^2(t_{1122}-\dfrac23 t_{1222}^2)\ge0 \Leftrightarrow t_{1122}=1, t_{1222}\in\{-1,0,1\}.$$
	If $t_{1112}=1$, then $t_{1122}\in\{0,1\},\ t_{1222}\in\{0,1\};  \mbox{ or} $
	$$27+54t_{1122}^3 +64 t_{1222}^3-36t_{1122}^2 t_{1222}^2-108t_{1122}\ge0 \Leftrightarrow t_{1122}=-1, t_{1222}\in\{0,1\}.$$
	
	\textbf{Case 2}. $t_{1111}=1,\ \ t_{2222}=0$,  the proof is the same as Case 1.
	
	\textbf{Case 3}. $t_{1111}=t_{2222}=0$.  Then for all  $x=(x_1,x_2)^\top\in\mathbb{R}^2_+$, we have
	$$\aligned \mathcal{T}x^4=&4t_{1112}x_1^3x_2+6t_{1122}x_1^2x_2^2+4t_{1222}x_1x_2^3\\
	=&2x_1x_2(2t_{1112}x_1^2+3t_{1122}x_1x_2+2t_{1222}x_2^2)\ge0,
	\endaligned$$
	which is equivalent to $$2t_{1112}x_1^2+3t_{1122}x_1x_2+2t_{1222}x_2^2\ge0.$$
	By Lemma  \ref{lem:22}, $\mathcal{T}x^4\ge0\Leftrightarrow t_{1112}\in\{0,1\},t_{1222}\in\{0,1\}, 3t_{1122}+4\sqrt{t_{1112}t_{1222}}\ge0.$
	That is, $$t_{1112}\in\{0,1\},t_{1222}\in\{0,1\}, t_{1122}\in\{0,1\} \mbox{ or }t_{1112}=t_{1222}=-t_{1122}=1.$$
	
\textbf{Case 4}. $t_{1111}=t_{2222}=1$. 	It follows from Lemma \ref{lem:23} that $\mathcal{T}$ is copositive if and only if
	$$\begin{cases}
		\Delta\leq0\mbox{ and }t_{1112}=t_{1222}=1;\\
		t_{1112}\in\{0,1\},t_{1222}\in\{0,1\}, t_{1122}\in\{0,1\};\\
		\Delta\geq0, t_{1122} \in\{0,1\}\mbox{  and }|t_{1112}-t_{1222}|\leq\sqrt{6t_{1122}+2}.
	\end{cases}$$
	
	Assume $t_{1112}=t_{1222}=1$. Then we have
	$$t_{1122}=1, \Delta=4\times12^3((1-4+3)^{3}-27(1+2-1^3-1-1)^{2})=0,$$
	 or $$t_{1122}=0, \Delta=4\times12^3((1-4+0)^{3}-27(0-0+0-1-1)^{2})<0,$$  or $$t_{1122}=-1, \Delta=4\times12^3((1-4+3)^{3}-27(-1-2+1-1-1)^{2})<0;$$
	So,$$\Delta\leq0\mbox{ and }t_{1112}=t_{1222}=1\ \Leftrightarrow\ t_{1112}=t_{1222}=1.$$
	
	Assume $t_{1122}=1$.   Then   when
	$t_{1112}t_{1222}=1$,  we have $$\Delta=4\times12^3((1-4+3)^{3}-27(1+2-1-1-1)^{2})=0, |t_{1112}-t_{1222}|=0<\sqrt8;$$ or
	when
	$t_{1112}t_{1222}=0$,  we have $$\Delta\ge4\times12^3((1-0+3)^3-27(1+0-1-1-0)^{2})>0, |t_{1112}-t_{1222}|\le1<\sqrt8;$$ or when $t_{1112}t_{1222}=-1,$ we have
	$$\Delta=4\times12^3((1+4+3)^{3}-27(1-2-1-1-1)^{2})>0, |t_{1112}-t_{1222}|=2<\sqrt8.$$
	Thus, the conditions, $\Delta\geq0$, $|t_{1112}-t_{1222}|\leq\sqrt{6t_{1122}+2}$ and $t_{1122}=1$ are equivalent to $$t_{1122}=1.$$
	
	Assume $t_{1122}=0$. Then  when  $t_{1112}t_{1222}=1$,  we have $$\Delta=4\times12^3((1-4+0)^{3}-27(0+0-0-1-1)^{2})<0, |t_{1112}-t_{1222}|=0<\sqrt2;$$ or
	when
	$t_{1112}t_{1222}=0$,  i.e., $t_{1112}=0$ or $t_{122}=0$ or $t_{1112}=t_{122}=0$, then  $$\Delta=4\times12^3((1-0+0)^3-27(0+0-0-1-0)^{2})<0, |t_{1112}-t_{1222}|\le1<\sqrt2,$$
or	$$\Delta=4\times12^3((1-0+0)^3-27(0+0-0-0-0)^{2})>0, |0-0|=0<\sqrt2;$$ or when $t_{1112}t_{1222}=-1,$ we have
	$$\Delta=4\times12^3((1+4+0)^{3}-27(0+0-0-1-1)^{2})>0, |t_{1112}-t_{1222}|=2>\sqrt2.$$
	Thus, the conditions, $\Delta\geq0$, $|t_{1112}-t_{1222}|\leq\sqrt{6t_{1122}+2}$ and $t_{1122}=0$ are equivalent to $$t_{1122}=t_{1112}=t_{1222}=0,$$
	which is covered in the second conditions, $t_{1112}\in\{0,1\},t_{1222}\in\{0,1\}, t_{1122}\in\{0,1\}.$
	So the desired conclusions follow.	
	
	Next we show the strict copositivity of  $\mathcal{T}$.  Clearly, $\mathcal{T}$ is copositive,  and then we only need show $$\mathcal{T}x^4=0\mbox{ for }x\in\mathbb{R}^2_+\ \Longrightarrow\ x=0.$$ If $t_{1112}\in\{0,1\},t_{1222}\in\{0,1\}, t_{1122}\in\{0,1\},$
	then the conclusion is obvious. For the remaining cconditions, $\mathcal{T}x^4$ may be rewritten as follows,
	$$\mathcal{T}x^4=\begin{cases}
		x_1^4+4x_1^3x_2-6 x_1^2x_2^2+4x_1x_2^3+x_2^4, \ \ t_{1112}=t_{1222}=1, t_{1122}=-1;\\
		x_1^4+4x_1^3x_2+6 x_1^2x_2^2-4x_1x_2^3+x_2^4, \ t_{1112}=-t_{1222}=1, t_{1122}=1;\\
		x_1^4-4x_1^3x_2+6 x_1^2x_2^2+4x_1x_2^3+x_2^4, \ -t_{1112}=t_{1222}=1, t_{1122}=1;\\
		x_1^4-4x_1^3x_2+6 x_1^2x_2^2+x_2^4, \ t_{1112}=-1, t_{1222}=0, t_{1122}=1;\\
		x_1^4+6 x_1^2x_2^2-4x_1x_2^3+x_2^4, \ t_{1112}=0, t_{1222}=-1, t_{1122}=1.
	\end{cases}$$
	Then solving the equations,  $$0=\mathcal{T}x^4=\begin{cases}
		(x_1^2+x_2^2)^2+4x_1x_2(x_1-x_2)^2;\\
		(x_1-x_2)^4+8x_1^3x_2;\\
		(x_1-x_2)^4+8x_1x_2^3;\\
		(x_1-x_2)^4+4x_1x_2^3;\\
		(x_1-x_2)^4+4x_1^3x_2,
	\end{cases}$$
	we obviously have  $x_1=x_2=0.$
	
	If $t_{1112}=t_{1222}=-1$ and $t_{1122}=1$,  then $$\mathcal{T}x^4=x_1^4-4x_1^3x_2+6 x_1^2x_2^2-4x_1x_2^3+x_2^4=(x_1-x_2)^4,$$
	and so, $\mathcal{T}x^4=0$ when $x_1=x_2>0$.  That's when $\mathcal{T}$ is only copositive, but not strictly copositive.
	This  completes the proof.	
	\end{proof}
	The following conclusion is obvious by Lemma \ref{lem:25}.
\begin{lemma} \label{lem:26}
	Let $\mathcal{T}$ be a 4th order 2-dimensional symmetric tensor with its entries $|t_{ijkl}|=1$. Then $\mathcal{T}$ is strictly copositive if and only if
	$$t_{1111}=t_{2222}=1, \begin{cases}
				t_{1112}=t_{1222}=1;\\
		t_{1112}t_{1222}=-1\mbox{ and }  t_{1122}=1.
	\end{cases}$$
\end{lemma}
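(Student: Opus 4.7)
The plan is to derive Lemma \ref{lem:26} as a direct specialization of Lemma \ref{lem:25}. Since Lemma \ref{lem:25} already gives a complete characterization of strict copositivity for a 4th-order 2-dimensional symmetric tensor with entries in $\{-1,0,1\}$, all that remains is to intersect that characterization with the hypothesis $|t_{ijkl}|=1$, which simply forbids any zero entry.

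First I would record the baseline condition from Lemma \ref{lem:25} that $t_{1111}=t_{2222}=1$ (since the only strictly copositive case forces both diagonals to be $1$). Next, I would walk through the three subcases listed in the strict copositivity clause of Lemma \ref{lem:25}:
\begin{itemize}
\item[(a)] $t_{1122}=0$, $t_{1112}\in\{0,1\}$, $t_{1222}\in\{0,1\}$: this is ruled out because $t_{1122}=0$ violates $|t_{ijkl}|=1$.
\item[(b)] $t_{1112}=t_{1222}=1$ (with $t_{1122}$ unconstrained, i.e.\ $t_{1122}\in\{-1,0,1\}$): under $|t_{ijkl}|=1$ this reduces to $t_{1112}=t_{1222}=1$ with $t_{1122}\in\{-1,1\}$, which is exactly the first branch of the statement (since the value of $t_{1122}$ is then immaterial provided it is $\pm1$).
\item[(c)] $t_{1112}t_{1222}\in\{0,-1\}$ and $t_{1122}=1$: the option $t_{1112}t_{1222}=0$ requires one of $t_{1112}$, $t_{1222}$ to vanish, so is excluded; hence this collapses to $t_{1112}t_{1222}=-1$ and $t_{1122}=1$, matching the second branch.
\end{itemize}

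Finally, I would combine (b) and (c) to obtain exactly the characterization claimed in Lemma \ref{lem:26}, and in the converse direction just observe that both surviving branches were already listed as sufficient for strict copositivity in Lemma \ref{lem:25}. Since the argument is purely a case filtration and no new polynomial manipulation is required, there is no real obstacle; the only thing to be careful about is not to accidentally drop the $t_{1122}=-1$ possibility in branch (b), which is legitimate under $|t_{ijkl}|=1$ but is not written explicitly in Lemma \ref{lem:25}'s second subcase because that subcase imposes no constraint on $t_{1122}$ at all.
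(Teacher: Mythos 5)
Your proposal is correct and matches the paper exactly: the paper gives no separate argument, stating only that Lemma \ref{lem:26} ``is obvious by Lemma \ref{lem:25},'' and your case filtration (discarding the $t_{1122}=0$ branch, collapsing $t_{1112}t_{1222}\in\{0,-1\}$ to $t_{1112}t_{1222}=-1$, and keeping $t_{1112}=t_{1222}=1$ with $t_{1122}=\pm1$ unconstrained) is precisely the intended derivation. Your closing caution about not dropping the $t_{1122}=-1$ possibility in the second branch is well taken and consistent with the explicit verification in the proof of Lemma \ref{lem:25} that $t_{1112}=t_{1222}=1$, $t_{1122}=-1$ yields $(x_1^2+x_2^2)^2+4x_1x_2(x_1-x_2)^2$, which vanishes on $\mathbb{R}^2_+$ only at the origin.
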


\section{Copositivity of 4th-order 3-dimensional symmetric tensors}

\begin{theorem}\label{thm:31}
	 Let $\mathcal{T}=(t_{ijkl})$ be a $4$th-order $n$-dimensional symmetric tensor. Then
 $\mathcal{T}$ is strictly copositive  if and only if $$\begin{cases}
		x\in\mathbb{R}_+^n\mbox{ and  }	\mathcal{T}x^4=0\ \Longrightarrow\ x=0,\\
		\mbox{there is a }y\in\mathbb{R}^n_+\setminus\{0\}\mbox{ such that }\mathcal{T}y^4>0;
	\end{cases}$$
\end{theorem}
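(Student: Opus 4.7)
The plan is to dispatch the forward implication by definition and then establish the reverse implication by an Intermediate Value Theorem argument on a line segment in $\mathbb{R}_+^n$.

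For the forward direction, assume $\mathcal{T}$ is strictly copositive. Then by definition $\mathcal{T}x^4>0$ for every nonzero $x\in\mathbb{R}_+^n$, so the contrapositive gives precisely the first condition: $\mathcal{T}x^4=0$ with $x\in\mathbb{R}_+^n$ forces $x=0$. For the second condition, any nonzero nonnegative vector, e.g.\ $y=(1,1,\dots,1)^\top$, satisfies $\mathcal{T}y^4>0$ by strict copositivity. So this direction is essentially immediate from the definition.

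For the reverse direction, I would argue by contradiction. Suppose both bulleted conditions hold but $\mathcal{T}$ is not strictly copositive. Then there exists some $x_0\in\mathbb{R}_+^n\setminus\{0\}$ with $\mathcal{T}x_0^4\le 0$. The first condition rules out $\mathcal{T}x_0^4=0$ (since $x_0\neq 0$), so in fact $\mathcal{T}x_0^4<0$. Now pick the vector $y\in\mathbb{R}_+^n\setminus\{0\}$ guaranteed by the second condition, which satisfies $\mathcal{T}y^4>0$, and consider the convex combination path
\[
x(t)=(1-t)x_0+ty,\qquad t\in[0,1].
\]
Since $\mathbb{R}_+^n$ is convex, $x(t)\in\mathbb{R}_+^n$ for every $t\in[0,1]$. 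Moreover for $t\in[0,1)$ one has $x(t)\ge(1-t)x_0$ componentwise with $1-t>0$ and $x_0\ne 0$, so $x(t)\ne 0$; and $x(1)=y\ne 0$. Hence $x(t)\in\mathbb{R}_+^n\setminus\{0\}$ for all $t\in[0,1]$.

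Define $f(t)=\mathcal{T}x(t)^4$. This is a polynomial (hence continuous) function of $t$ with $f(0)=\mathcal{T}x_0^4<0$ and $f(1)=\mathcal{T}y^4>0$, so by the Intermediate Value Theorem there exists $t^{*}\in(0,1)$ with $f(t^{*})=0$. Since $x(t^{*})\in\mathbb{R}_+^n\setminus\{0\}$, this contradicts the first condition, which asserts $\mathcal{T}x^4=0$ on $\mathbb{R}_+^n$ forces $x=0$. Therefore $\mathcal{T}$ must be strictly copositive, completing the proof. There is no real obstacle here: the only subtlety is verifying that the interior point $x(t^{*})$ is nonzero, which follows from convexity of the nonnegative orthant together with $t^{*}\in(0,1)$.
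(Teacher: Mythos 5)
Your proof is correct and follows essentially the same route as the paper: both handle necessity directly from the definition and prove sufficiency by contradiction via the Intermediate Value Theorem applied to $\mathcal{T}\bigl((1-t)x_0+ty\bigr)^4$ on the segment joining the hypothetical negative point to the guaranteed positive point. The only cosmetic difference is that you observe directly that the intermediate zero $x(t^{*})$ is a nonzero point of $\mathbb{R}_+^n$ (via convexity and $t^{*}\in(0,1)$), whereas the paper first concludes $z=0$ from the first condition and then derives $u=y=0$ componentwise; your closing step is marginally cleaner but the argument is the same.
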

\begin{proof}
The necessarity is obvious.  Now we show the sufficiency. Suppose $\mathcal{T}$ is not strictly copositive when the conditions are satisfied. There exists $u\in\mathbb{R}^n_+\setminus \{0\}$ such that $\mathcal{T}u^4\leq0$.  Since $\mathcal{T}u^4=0$ means $u=0$ by the conditions, then $\mathcal{T}u^4<0$. Apply the intermediate value theore to continuous function $\mathcal{T}x^4$, there is an $\lambda\in(0,1)$ such that \begin{center}
		$z=(1-\lambda)u+\lambda y$ satisfying $\mathcal{T}z^4 =0.$
	\end{center} This implies $z=(1-\lambda)u+\lambda y=0$, and then for all $i$, $$(1-\lambda)u_i\ge0,  \lambda y_i\ge0\mbox{ and }(1-\lambda)u_i+\lambda y_i=0.$$ So, we must have $u=y=0$,
	a contradiction. Therefore, $\mathcal{T}$ is  strictly copositive.
\end{proof}

\begin{theorem}\label{thm:32}
Let $\mathcal{T} = (t_{ijkl})$ be a 4th-order 3-dimensional symmetric tensor. Suppose $$|t_{ijkl}| = t_{iiii}= t_{iijj}=1,t_{iiij}t_{ijjj}=-1\mbox{ for all }i,j,k,l \in \{1,2,3 \}, i \neq j, i \neq k, k \neq i.$$ Then $\mathcal{T}$ is strictlly copositive if and only if
there is  at least $1$ in $\{t_{1123},t_{1223},t_{1233}\}$ and  for $i\ne j, j\ne k, i\ne k$, $$t_{iijk}=-1, t_{iiij}+t_{iiik}\ge0.$$
\end{theorem}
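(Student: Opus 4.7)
The plan is to apply Theorem \ref{thm:31} to reduce strict copositivity of $\mathcal{T}$ to verifying that the only nonnegative zero of $\mathcal{T}x^4$ is $x=0$; the auxiliary existence of a $y \in \mathbb{R}^3_+\setminus\{0\}$ with $\mathcal{T}y^4 > 0$ is immediate, taking $y = e_1$ so that $\mathcal{T}y^4 = t_{1111} = 1$. Write $a_{ij} = t_{iiij}$ and, for each $i$, $b_i = t_{iijk}$ with $\{j,k\} = \{1,2,3\}\setminus\{i\}$. The hypotheses $|t_{iiij}|=1$ and $t_{iiij}t_{ijjj}=-1$ force $a_{ji}=-a_{ij}$, so that
\[
\mathcal{T}x^4 = \sum_{i=1}^{3} x_i^4 + 6\sum_{1 \le i<j \le 3} x_i^2 x_j^2 + 4\sum_{i \ne j} a_{ij}\, x_i^3 x_j + 12\sum_{i=1}^{3} b_i\, x_i^2 x_j x_k.
\]

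A first observation is that on any coordinate face $\{x_i=0\}$ the restriction of $\mathcal{T}x^4$ is a 4th-order 2-dim polynomial whose coefficient tensor satisfies the hypotheses of Lemma \ref{lem:26} and is therefore strictly copositive. Consequently any zero of $\mathcal{T}x^4$ in $\mathbb{R}^3_+ \setminus \{0\}$ must have $x_1, x_2, x_3 > 0$, so the problem reduces to the open positive orthant.

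For necessity I would test two vectors. At $x=(1,1,1)^\top$ the antisymmetric sum vanishes because $a_{ij}+a_{ji}=0$, giving $\mathcal{T}x^4 = 21 + 12(b_1+b_2+b_3)$; strict copositivity forces $b_1+b_2+b_3 \ge -1$, equivalently at least one $b_i=1$, which is the first condition. For the second condition, suppose some $b_i=-1$ together with $t_{iiij}+t_{iiik}=-2$, WLOG $b_1=-1$ and $a_{12}=a_{13}=-1$. Substituting $x=(t,1,1)^\top$, the antisymmetric part collapses to $-8(t^3-t)$, the $t^2$-coefficients cancel, and one obtains $\mathcal{T}x^4 = t^4 - 8t^3 + (8+12(b_2+b_3))t + 8$. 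At $t=6$ this equals $-376 + 72(b_2+b_3) \le -232$ for every $b_2,b_3 \in \{\pm 1\}$, contradicting strict copositivity.

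The main obstacle is sufficiency: granted both conditions, show $\mathcal{T}x^4 > 0$ throughout the open positive orthant. I would stratify by the number of indices with $b_i=-1$. When no $b_i=-1$, the middle sum equals $12 x_1 x_2 x_3(x_1+x_2+x_3)$, and an AM--GM comparison of the antisymmetric piece $4\sum_{i<j} a_{ij} x_i x_j(x_i-x_j)(x_i+x_j)$ against the baseline $(x_1^2+x_2^2+x_3^2)^2 + 4\sum_{i<j} x_i^2 x_j^2$ closes the case. When exactly one (respectively two) $b_i$ equals $-1$, the constraints from condition (ii) combined with $a_{ji}=-a_{ij}$ restrict the antisymmetric sign pattern to a short list of configurations; in each one I would decompose $\mathcal{T}x^4$ as a positive sum of explicit quartic identities, mirroring the fourth-power expressions $(x_i-x_j)^4 + 4 x_i^a x_j^b$ that appeared in the 2-dim analysis of Lemma \ref{lem:25}. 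The delicate arithmetic step, uniform across these subcases, is to absorb the negative contributions $-12 x_i^2 x_j x_k$ into the mixed $6 x_i^2 x_j^2$ terms together with the sign-constrained antisymmetric part.
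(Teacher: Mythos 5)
Your overall strategy coincides with the paper's: reduce via Theorem \ref{thm:31} to showing that $\mathcal{T}x^4=0$ has only the trivial root in $\mathbb{R}^3_+$, prove necessity by testing $(1,1,1)^\top$ and a vector of the form $(t,1,1)^\top$, and split sufficiency into cases according to how many of $t_{1123},t_{1223},t_{1233}$ equal $-1$. Your necessity argument is correct and complete: the polynomial $t^4-8t^3+\bigl(8+12(b_2+b_3)\bigr)t+8$ is exactly what the paper evaluates (at $t=3$, getting at most $-31$; your $t=6$ works equally well), and using Lemma \ref{lem:26} to dispose of the coordinate faces matches the paper.

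The gap is in sufficiency, which is where all the work lies. First, the one concrete estimate you propose is false. In the case $b_1=b_2=b_3=1$ you claim the antisymmetric part is dominated by the baseline $(x_1^2+x_2^2+x_3^2)^2+4\sum_{i<j}x_i^2x_j^2$ alone, i.e., after discarding the nonnegative term $12x_1x_2x_3(x_1+x_2+x_3)$. Take the admissible pattern $t_{1112}=t_{1113}=t_{2333}=-1$ (so $t_{1222}=t_{1333}=t_{2223}=1$, the paper's own Case 1 representative) and $x=(5,1,1)^\top$: the baseline equals $933$, the antisymmetric part equals $4\bigl[-(125-5)-(125-5)+0\bigr]=-960$, so their sum is $-27<0$; only the discarded term $12\cdot5\cdot7=420$ restores positivity. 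Hence that AM--GM comparison cannot close even the easiest case. Second, for the configurations with one or two $b_i=-1$ you state only the intention to ``decompose $\mathcal{T}x^4$ as a positive sum of explicit quartic identities'' without exhibiting any such identity, so the heart of the proof is missing. The paper instead rewrites $\mathcal{T}x^4$ in each admissible sign configuration as $(x_1+x_2+x_3)^4-8(\cdot)-24(\cdot)$ with explicit correction terms and checks that each resulting quartic vanishes on $\mathbb{R}^3_+$ only at the origin; to complete your argument you would need to produce estimates at that level of explicitness, keeping the $12x_1x_2x_3$ contribution in play throughout, rather than describing the plan.
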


\begin{proof}
\textbf{Necessity}.  For $x=(1,1,1)^\top$, we have
$$\aligned
	\mathcal{T} x^4 =& x_1^4 + x_2^4 + x_3^4 +6x_1^2 x_2^2 + 6x_1^2 x_3^2 + 6x_2^2 x_3^2 +4t_{1112}x_1^3 x_2+ 4t_{1113} x_1^3 x_3 \\
		&+ 4t_{1222}x_1 x_2^3 +4t_{2223}x_2^3 x_3 + 4t_{1333}x_1 x_3^3 + 4t_{2333} x_2 x_3^3\\
		&+12t_{1123}x_1^2 x_2 x_3 + 12 t_{1223}x_1 x_2^2 x_3 + 12t_{1233} x_1 x_2 x_3^2 \\
	=&21+12(t_{1123}+t_{1223}+t_{1233}) > 0,
\endaligned$$
and hence, $$t_{1123}+t_{1223}+t_{1233}>-\dfrac{21}{12}.$$
Since $|t_{ijkl}|=1$, then $t_{1123}=t_{1223}=t_{1233}\ne-1$, and so,  the condition that there is at least one $1$ in $\{t_{1123},t_{1223},t_{1233}\}$  is necessary.

Now we show the necessity of  the other condition that for $i\ne j, j\ne k, i\ne k$, $t_{iijk}=-1$ and $t_{iiij}+t_{iiik}\ge0$.  Let $t_{1123}=-1$ without the generality. Then  $2\ge t_{1223}+t_{1233}\ge0$ by the condition that there is at least one $1$ in $\{t_{1123},t_{1223},t_{1233}\}$.

Assume the inequality that $t_{1112}+t_{1113}\ge0$ doesn't hold.  Then $t_{1112}=t_{1113}=-1$,  and moreover, $t_{1222}=t_{1333}=1$ by the codition $t_{iiij}t_{ijjj}=-1$. By this time,  for $x=(3,1,1)^\top$, noticing $t_{2223}t_{2333}=-1 \Rightarrow t_{2223}+t_{2333}=0$, we have
$$ \aligned
\mathcal{T} x^4 =& x_1^4 + x_2^4 + x_3^4 + 6x_1^2 x_2^2 + 6x_1^2 x_3^2 + 6x_2^2 x_3^3 - 12x_1^2 x_2 x_3 + 12 t_{1223}x_1 x_2^2 x_3 + 12t_{1233} x_1 x_2 x_3^2\\
&-4 x_1^3 x_2 - 4 x_1^3 x_3 + 4 x_1 x_2^3 + 4x_1 x_3^3 + 4t_{2223}x_2^3 x_3 +4t_{2333}x_2 x_3^3\\
=&83+54+54+6-108+36(t_{1223}+t_{1233})-108-108+12+12+4(t_{2223}+t_{2333})\\
\leq& 89+36\times2-192=-31<0,
\endaligned$$
which contradicts to the strict copositivity of $\mathcal{T}$. So, we must have $t_{1112}+t_{1113}\ge0$.

\textbf{Sufficiency}.  From Lemma \ref{lem:26} and the condition that $  t_{iiii}= t_{iijj}=1,t_{iiij}t_{ijjj}=-1$  for all $ i,j,k,l \in \{1,2,3 \}, i \neq j, i \neq k, k \neq i,$  it follows that each $2$-dimensional principal subtensor is strictly copositive,  and so,  there exists $$y\in\mathbb{R}^3_+\setminus\{0\}\mbox{ such that }\mathcal{T}y^4>0.$$
By Theorem \ref{thm:31}, we only show that $$x\in\mathbb{R}_+^3\mbox{  and  }	\mathcal{T}x^4=0\ \Longrightarrow\ x=0.$$

\textbf{Case 1}. $t_{1123}=t_{1223}=t_{1233}=1$.  Let $t_{1222}=-t_{1112}=t_{1333}=-t_{1113}=t_{2223}=-t_{2333}=1$ without the generality.
Then $\mathcal{T} x^4$ may be rewritten as follow, $$\mathcal{T} x^4 =(x_1+x_2+x_3)^4-8(x_1^3x_2+x_1^3x_3+x_2x_3^3).$$

\textbf{Case 2}.  There is only two $1$ in  $\{t_{1123},t_{1223},t_{1233}\}$. We might take $t_{1123}=-1, t_{1223}=t_{1233}=1$ and $t_{2223}=-t_{2333}=1$. Obviously, the condition that $t_{1112}+t_{1113}\ge0$ is equivalent to $$t_{1112}=t_{1113}=1\mbox{ or }t_{1112}t_{1113}=-1.$$
Then $\mathcal{T} x^4$ may be rewritten as
$$\mathcal{T} x^4 =(x_1+x_2+x_3)^4-8(x_1x_2^3+x_1x_3^3+x_2x_3^3)-24x_1^2x_2x_3,$$
or $$\mathcal{T} x^4 =(x_1+x_2+x_3)^4-8(x_1x_2^3+x_1^3x_3+x_2x_3^3)-24x_1^2x_2x_3,$$
or $$\mathcal{T} x^4 =(x_1+x_2+x_3)^4-8(x_1^3x_2+x_1x_3^3+x_2x_3^3)-24x_1^2x_2x_3.$$

\textbf{Case 3}.  There is only one $1$ in  $\{t_{1123},t_{1223},t_{1233}\}$. We might take $t_{1123}=t_{1223}=-1, t_{1223}=1$. Obviously, the conditions that $t_{1112}+t_{1113}\ge0$ and $t_{1222}+t_{2223}\ge0$ are equivalent to $$t_{1112}=t_{1113}=1\mbox{ or }t_{1112}t_{1113}=-1$$ and $$t_{1222}=t_{2223}=1\mbox{ or }t_{1222}t_{2223}=-1.$$ That is,$$t_{1112}=t_{1113}=-t_{1222}=t_{2223}=1\mbox{ or }t_{1222}=t_{2223}=-t_{1112}=t_{1113}=1, \mbox{ or }$$
$$t_{1112}=-t_{1113}=-t_{1222}=t_{2223}=1\mbox{ or }-t_{1112}=t_{1113}=t_{1222}=-t_{2223}=1.$$
 Then $\mathcal{T} x^4$ may be rewritten as
$$\mathcal{T} x^4 =(x_1+x_2+x_3)^4-8(x_1x_2^3+x_1x_3^3+x_2x_3^3)-24x_1x_2x_3(x_1+x_2),$$
or $$\mathcal{T} x^4 =(x_1+x_2+x_3)^4-8(x_1^3x_2+x_1x_3^3+x_2x_3^3)-24x_1x_2x_3(x_1+x_2),$$
or $$\mathcal{T} x^4 =(x_1+x_2+x_3)^4-8(x_1x_2^3+x_1^3x_3+x_2x_3^3)-24x_1x_2x_3(x_1+x_2),$$
or $$\mathcal{T} x^4 =(x_1+x_2+x_3)^4-8(x_1^3x_2+x_1x_3^3+x_2^3x_3)-24x_1x_2x_3(x_1+x_2).$$

It is easy to verify that for   the above all expressions $\mathcal{T} x^4$,  the   equation  $\mathcal{T} x^4 =0$ has only one real root $x_1=x_2=x_3=0$ in non-negativet octant $\mathbb{R}_+^3$. By Theorem \ref{thm:31}, $\mathcal{T} $ is strictly copositve.  This completes the proof.
\end{proof}

\begin{theorem}\label{thm:33}
Let $\mathcal{T} = (t_{ijkl})$ be a 4th-order 3-dimensional symmetric tensor with its entries $$t_{iiii}=t_{iiij}=-t_{iijj}=1,  t_{iijk}\in \{-1,0,1\},i,j,k=1,2,3,i\neq j,i \neq k, j \neq k.$$ Then $\mathcal{T}$ is strictly copositive if and only if $$t_{1123}+t_{1223}+t_{1233}\ge0.$$
\end{theorem}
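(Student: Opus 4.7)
My plan for Theorem~\ref{thm:33} is to prove necessity by testing the all-ones vector and sufficiency through Theorem~\ref{thm:31}, the key work being a sharp uniform lower bound for $\mathcal{T}x^4$ on the positive orthant. For necessity, a direct count using $t_{iiii}=t_{iiij}=1$ and $t_{iijj}=-1$ gives $\mathcal{T}(1,1,1)^4=9+12(t_{1123}+t_{1223}+t_{1233})$, so strict copositivity forces $9+12s>0$ with $s:=t_{1123}+t_{1223}+t_{1233}$, and integrality of the $t_{iijk}$ reduces this to $s\ge 0$.

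For sufficiency, assume $s\ge 0$ and verify the hypotheses of Theorem~\ref{thm:31}. By Lemma~\ref{lem:26} in its first sub-case ($t_{1111}=t_{2222}=t_{1112}=t_{1222}=1$, $|t_{1122}|=1$), every 2-dimensional principal sub-tensor of $\mathcal{T}$ is strictly copositive; in particular $\mathcal{T}e_i^4=1>0$ furnishes the witness of the existence clause. For the implication clause $\mathcal{T}x^4=0,\,x\in\mathbb{R}^3_+\Rightarrow x=0$, if $x$ has at most two nonzero coordinates the 2-dimensional strict copositivity forces $x=0$, so only the interior case $x_1,x_2,x_3>0$ remains.

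The main calculation will be a uniform lower bound. Writing
\[
\mathcal{T}x^4=\sum_i x_i^4+4\sum_{i\ne j}x_i^3x_j-6\sum_{i<j}x_i^2x_j^2+12\,x_1x_2x_3\,T,\qquad T:=t_{1123}x_1+t_{1223}x_2+t_{1233}x_3,
\]
and invoking $x_i^3x_j+x_ix_j^3\ge 2x_i^2x_j^2$ pairwise, I plan to obtain
\[
\mathcal{T}x^4\ge (x_1^2+x_2^2+x_3^2)^2+12\,x_1x_2x_3\,T.
\]
Because $s\ge 0$ with $t_{iijk}\in\{-1,0,1\}$ forbids two entries from being $-1$ simultaneously, a short case check shows $T\ge x_{(3)}-x_{(1)}$, the minimum being attained at $(t_{1123},t_{1223},t_{1233})=(-1,0,1)$ after relabeling. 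Using the symmetry of the resulting bound in the $x_i$, I may assume $x_1\ge x_2\ge x_3>0$, reducing the problem to
\[
(x_1^2+x_2^2+x_3^2)^2>12\,x_1x_2x_3(x_1-x_3).
\]

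The main obstacle is closing this strict inequality cleanly. My intended route is a second AM-GM, $x_2^2+x_3^2\ge 2x_2x_3$, which reduces it to $(x_1^2+2x_2x_3)^2-12\,x_1x_2x_3(x_1-x_3)\ge 0$. Expanding gives $4x_2(x_2+3x_1)x_3^2-8x_1^2x_2x_3+x_1^4$, a quadratic in $x_3$ with positive leading coefficient and discriminant $48x_1^4x_2(x_2-x_1)$, which is non-positive under $x_1\ge x_2$. Hence this quadratic is non-negative, vanishing only when $x_1=x_2$ and $x_3=x_1/4$; but at such a point $x_2\ne x_3$, so the preceding AM-GM step is strict, and the chained bound remains strict on $\mathbb{R}^3_{>0}$, giving $\mathcal{T}x^4>0$ and closing the proof via Theorem~\ref{thm:31}.
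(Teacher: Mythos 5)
Your proof is correct, and while the necessity half coincides with the paper's (evaluate at $(1,1,1)^\top$, get $9+12s>0$ with $s:=t_{1123}+t_{1223}+t_{1233}$, and use integrality), the sufficiency half takes a genuinely different route. The paper also reduces to Theorem \ref{thm:31} and the observation that $s\ge 0$ permits at most one $-1$ among the $t_{iijk}$, but it then splits into two cases (exactly one $-1$ versus none), discards the nonnegative trailing terms, rewrites what remains as $(x_1+x_2+x_3)^4-12\sum_{i<j}x_i^2x_j^2-12x_1x_2x_3(\cdots)$, and simply asserts that each of these quartics vanishes on $\mathbb{R}^3_+$ only at the origin. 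You instead compress all admissible sign patterns into the single estimate $T\ge \min_i x_i-\max_i x_i$, use AM--GM on the pairs $x_i^3x_j+x_ix_j^3$ to obtain the uniform bound $\mathcal{T}x^4\ge(x_1^2+x_2^2+x_3^2)^2+12x_1x_2x_3(\min_i x_i-\max_i x_i)$, and close the strict inequality with an explicit discriminant computation; I checked the algebra, namely
$$(x_1^2+2x_2x_3)^2-12x_1x_2x_3(x_1-x_3)=4x_2(x_2+3x_1)x_3^2-8x_1^2x_2x_3+x_1^4$$
with discriminant $48x_1^4x_2(x_2-x_1)\le 0$ for $x_1\ge x_2$, and the only zero $x_1=x_2$, $x_3=x_1/4$ is incompatible with equality in $x_2^2+x_3^2\ge 2x_2x_3$, so the chain is strict on $\mathbb{R}^3_{>0}$. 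What your approach buys is that the step the paper leaves as ``it is not difficult to verify'' is actually carried out, case-free and in closed form; what the paper's route buys is the $(x_1+x_2+x_3)^4$-based decomposition it reuses in Theorems \ref{thm:32} and \ref{thm:36}. A minor remark: since your uniform bound gives $\mathcal{T}x^4>0$ on all of $\mathbb{R}^3_{>0}$ and Lemma \ref{lem:26} handles vectors with a zero coordinate, you in fact prove strict copositivity directly and do not need Theorem \ref{thm:31} at all.
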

\begin{proof}
	\textbf{Necessity}.  For $x=(1,1,1)^\top$,  we have
$$\aligned
\mathcal{T} x^4 =& x_1^4 + x_2^4 + x_3^4 -6x_1^2 x_2^2 - 6x_1^2 x_3^2 -6x_2^2 x_3^2  \\
&+4x_1^3 x_2+ 4 x_1^3 x_3+ 4x_1 x_2^3 +4x_2^3 x_3 + 4x_1 x_3^3 + 4 x_2 x_3^3\\
&+12t_{1123}x_1^2 x_2 x_3 + 12 t_{1223}x_1 x_2^2 x_3 + 12t_{1233} x_1 x_2 x_3^2 \\
=&9 +12(t_{1123}+   t_{1223}+  t_{1233} )>0.
\endaligned$$
That is, $t_{1123}+ t_{1223}+  t_{1233} >-\dfrac34,$
and hence, $$t_{1123}+t_{1223}+t_{1233}\ge0$$
since $t_{iijk}\in \{-1,0,1\}.$

\textbf{Sufficiency}. It follows from the condition that $t_{iijk}\in \{-1,0,1\}$ that
$$t_{1123}+t_{1223}+t_{1233}\ge0\Longleftrightarrow\begin{cases}
t_{1123}\in\{0,1\}, t_{1223}\in\{0,1\}, t_{1233}\in\{0,1\};\\
t_{1123}=-1, \begin{cases}
	t_{1223}\in\{0,1\}, t_{1233}=1;\\
	t_{1223}=1, t_{1233}\in\{0,1\};
\end{cases}\\
t_{1123}=0, t_{1223}t_{1233}=-1;\\
t_{1123}=1,\begin{cases}
	t_{1223}t_{1233}=0;\\
	t_{1223}t_{1233}=-1.
\end{cases}\\
\end{cases}$$
So, there is at most one $-1$ in $\{t_{1123}, t_{1223}, t_{1233}\}$ and both $1$ and $-1$ always come in a pair.

\textbf{Case 1}.  There is  actually one $-1$ in $\{t_{1123}, t_{1223}, t_{1233}\}$.  Let $t_{1123}=-1$, $t_{1223}=1, t_{1233}\in\{0,1\}$ without the generality. Then  $\mathcal{T} x^4 $ may be expressed as follows,
$$\aligned
\mathcal{T} x^4= &x_1^4 + x_2^4 + x_3^4 + 4x_1^3 x_2 + 4x_1^3 x_2 + 4x_1 x_2^3 + 4x_2^3 x_3 + 4x_1 x_3^3 + 4x_2 x_3^3\\
&-6x_1^2 x_2^2 -6x_1^2 x_3^2 -6x_2^2 x_3^2 -12  x_1^2 x_2 x_3 +12 x_1 x_2^2 x_3 + 12t_{1233} x_1 x_2 x_3^2\\
\ge &x_1^4 + x_2^4 + x_3^4 + 4x_1^3 x_2 + 4x_1^3 x_2 + 4x_1 x_2^3 + 4x_2^3 x_3 + 4x_1 x_3^3 + 4x_2 x_3^3\\
&-6x_1^2 x_2^2 -6x_1^2 x_3^2 -6x_2^2 x_3^2 -12 x_1^2 x_2 x_3+12 x_1 x_2^2 x_3\\
=&(x_1 + x_2 + x_3)^4 - 12(x_1^2 x_2^2 +x_1^2 x_3^2 +x_2^2 x_3^2)- 12x_1 x_2 x_3( 2 x_1 +x_3) .
\endaligned$$
Let $$\mathcal{T}' x^4=(x_1 + x_2 + x_3)^4 - 12(x_1^2 x_2^2 +x_1^2 x_3^2 +x_2^2 x_3^2)- 12x_1 x_2 x_3( 2 x_1 +x_3).$$ Then, solve the equation $\mathcal{T}' x^4=0$ in the non-negative orthant $\mathbb{R}_+^3$ to yield $x=0$.  Simultaneously, by Lemma \ref{lem:26},  the condition that $t_{iiii}= t_{iiij}=-t_{iijj}=1$ implies  the strict copositivity of each $2$-dimensional principal subtensor.  So an application of Theorem \ref{thm:31} erects the strict copositivity of $\mathcal{T}' $, and hence, $\mathcal{T}$ is strictly copositive.

\textbf{Case 2}.  There is not  $-1$ in $\{t_{1123}, t_{1223}, t_{1233}\}$.  Then $t_{1123}\ge0, t_{1223}\ge0, t_{1233}\ge0$, and moreover, $\mathcal{T} x^4 $ may be rewritten as follows,
$$\aligned
\mathcal{T} x^4= &x_1^4 + x_2^4 + x_3^4 + 4x_1^3 x_2 + 4x_1^3 x_2 + 4x_1 x_2^3 + 4x_2^3 x_3 + 4x_1 x_3^3 + 4x_2 x_3^3\\
&-6x_1^2 x_2^2 -6x_1^2 x_3^2 -6x_2^2 x_3^2 +12 t_{1123} x_1^2 x_2 x_3 +12t_{1223} x_1 x_2^2 x_3 + 12t_{1233} x_1 x_2 x_3^2\\
\ge &x_1^4 + x_2^4 + x_3^4 + 4x_1^3 x_2 + 4x_1^3 x_2 + 4x_1 x_2^3 + 4x_2^3 x_3 + 4x_1 x_3^3 + 4x_2 x_3^3\\
&-6x_1^2 x_2^2 -6x_1^2 x_3^2 -6x_2^2 x_3^2\\
=&(x_1 + x_2 + x_3)^4 - 12(x_1^2 x_2^2 +x_1^2 x_3^2 +x_2^2 x_3^2)- 12x_1 x_2 x_3(  x_1 +x_3+x_3).
\endaligned$$
Similarly, it is not difficult to verify that in  $R_+^3$, the unique solution the equation, $$\mathcal{T} ''x^4=(x_1 + x_2 + x_3)^4 - 12(x_1^2 x_2^2 +x_1^2 x_3^2 +x_2^2 x_3^2)- 12x_1 x_2 x_3(  x_1 +x_3+x_3)=0$$
 is $x=0.$ Therefore, $\mathcal{T}$ is strictly copositive by Theorem \ref{thm:31}.
\end{proof}

\begin{corollary}\label{cor:34}
	Let $\mathcal{T} = (t_{ijkl})$ be a 4th-order 3-dimensional symmetric tensor with its entries $$t_{iiii}=t_{iiij}=1, t_{iijj}, t_{iijk}\in \{-1,0,1\},i,j,k=1,2,3,i\neq j,i \neq k, j \neq k.$$ Then $\mathcal{T}$ is strictly copositive   if $t_{1123}+t_{1223}+t_{1233}\ge0.$
\end{corollary}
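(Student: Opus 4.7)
The plan is to reduce the statement to Theorem \ref{thm:33} by a monotonicity argument in the $t_{iijj}$ entries. I would define an auxiliary tensor $\mathcal{T}' = (t'_{ijkl})$ with the same entries as $\mathcal{T}$ except that $t'_{iijj} = -1$ for every $i \ne j$. Then $\mathcal{T}'$ meets the hypotheses of Theorem \ref{thm:33}: one has $t'_{iiii} = t'_{iiij} = -t'_{iijj} = 1$ and $t'_{iijk} = t_{iijk} \in \{-1, 0, 1\}$. Since $t'_{1123} + t'_{1223} + t'_{1233} = t_{1123} + t_{1223} + t_{1233} \ge 0$ by assumption, Theorem \ref{thm:33} hands us the strict copositivity of $\mathcal{T}'$ for free.

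The second step is to compare $\mathcal{T} x^4$ and $\mathcal{T}' x^4$ on the nonnegative orthant. Since the two tensors differ only in the entries $t_{iijj}$ (with $i \ne j$), and the corresponding monomial $x_i^2 x_j^2$ appears with total multiplicity $6$ after accounting for the symmetric index permutations, one obtains
$$\mathcal{T} x^4 - \mathcal{T}' x^4 \;=\; 6 \sum_{1 \le i < j \le 3} (t_{iijj} + 1)\, x_i^2 x_j^2.$$
The condition $t_{iijj} \in \{-1, 0, 1\}$ makes every coefficient $t_{iijj} + 1$ nonnegative, while each $x_i^2 x_j^2 \ge 0$ on $\mathbb{R}_+^3$. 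Hence $\mathcal{T} x^4 \ge \mathcal{T}' x^4$ throughout $\mathbb{R}_+^3$, and the strict copositivity of $\mathcal{T}'$ promotes to $\mathcal{T} x^4 > 0$ on $\mathbb{R}_+^3 \setminus \{0\}$, which is exactly the strict copositivity of $\mathcal{T}$.

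Given this reduction, there is no genuine obstacle: the entire content of the corollary is Theorem \ref{thm:33} together with the monotonicity remark that loosening $t_{iijj}$ from the extremal value $-1$ up to $0$ or $+1$ only increases $\mathcal{T} x^4$ on the nonnegative orthant. The only bookkeeping point worth double-checking is the coefficient $6$ in front of $x_i^2 x_j^2$ (so that the difference is written correctly) and the fact that no other terms in the expansion of $\mathcal{T} x^4$ depend on $t_{iijj}$; both are immediate from the full symmetry of $\mathcal{T}$ and $\mathcal{T}'$ and from the choice to modify only the $t_{iijj}$ slots.
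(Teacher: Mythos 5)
Your proposal is correct, and it is essentially the argument the paper intends: the corollary is stated without proof immediately after Theorem \ref{thm:33}, precisely because replacing each $t_{iijj}$ by $-1$ yields a tensor covered by that theorem, and relaxing $t_{iijj}$ back up to $0$ or $1$ only adds the nonnegative quantity $6\sum_{i<j}(t_{iijj}+1)x_i^2x_j^2$ on $\mathbb{R}_+^3$. Your coefficient $6$ and the observation that no other monomials are affected are both right.
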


\begin{corollary}\label{cor:35}
Let $\mathcal{T} = (t_{ijkl})$ be a 4th-order 3-dimensional symmetric tensor. If $t_{iiii} \geq 1,t_{iiij} \geq 1,t_{iijj} \geq-1,t_{iijk} \geq0$ for all $i,j,k \in \{1,2,3\},i\neq j,i\neq k, j \neq k$, then $\mathcal{T}$ is strictly copositive.
\end{corollary}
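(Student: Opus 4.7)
The plan is to obtain this corollary as an immediate consequence of Theorem \ref{thm:33} via a splitting/dominance argument. First I would introduce a reference tensor $\mathcal{T}_0=(t^0_{ijkl})$ whose entries sit exactly at the lower thresholds appearing in the hypothesis, namely
\[
t^0_{iiii}=1,\quad t^0_{iiij}=1,\quad t^0_{iijj}=-1,\quad t^0_{iijk}=0
\]
for distinct $i,j,k\in\{1,2,3\}$. Since this tensor satisfies $t^0_{iiii}=t^0_{iiij}=-t^0_{iijj}=1$ with $t^0_{iijk}\in\{-1,0,1\}$, it falls within the framework of Theorem \ref{thm:33}, and the sign condition $t^0_{1123}+t^0_{1223}+t^0_{1233}=0\ge 0$ is trivially met, so that theorem immediately yields strict copositivity of $\mathcal{T}_0$.

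Next I would set $\mathcal{R}=\mathcal{T}-\mathcal{T}_0$ and read off its entries. The four inequalities assumed in the corollary, $t_{iiii}\ge 1$, $t_{iiij}\ge 1$, $t_{iijj}\ge -1$, and $t_{iijk}\ge 0$, translate one-for-one into the statement that every entry of $\mathcal{R}$ is nonnegative. Because $\mathcal{R}x^4$ is then a sum of nonnegative coefficients times monomials in the nonnegative variables $x_1,x_2,x_3$, we get $\mathcal{R}x^4\ge 0$ for all $x\in\mathbb{R}_+^3$ without further work.

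Combining the two pieces by linearity of the homogeneous polynomial evaluation, for every nonzero $x\in\mathbb{R}_+^3$,
\[
\mathcal{T}x^4=\mathcal{T}_0 x^4+\mathcal{R}x^4\ge \mathcal{T}_0 x^4>0,
\]
with the strict inequality inherited entirely from the strict copositivity of $\mathcal{T}_0$. This proves $\mathcal{T}$ is strictly copositive. There is essentially no obstacle in this argument; the only points requiring any verification are that $\mathcal{T}_0$ genuinely fits both the form and the sign condition of Theorem \ref{thm:33} and that subtracting $\mathcal{T}_0$ entry-by-entry from $\mathcal{T}$ produces an entrywise nonnegative tensor, and both of these are immediate from the hypothesis.
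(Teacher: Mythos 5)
Your argument is correct and is evidently the intended derivation: the paper states Corollary \ref{cor:35} without proof as a consequence of Theorem \ref{thm:33}, and your decomposition $\mathcal{T}=\mathcal{T}_0+\mathcal{R}$ with $\mathcal{T}_0$ at the threshold values (strictly copositive by Theorem \ref{thm:33} since $t^0_{1123}+t^0_{1223}+t^0_{1233}=0$) and $\mathcal{R}$ entrywise nonnegative supplies exactly the missing monotonicity step. No gaps.
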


\begin{theorem}\label{thm:36}
Let $\mathcal{T} = (t_{ijkl})$ be a 4th-order 3-dimensional symmetric tensor with its entries $$t_{iiii}=t_{iiij}=-t_{iijk}=1, t_{iijj}\in \{-1,0,1\}, i,j,k=1,2,3,i\neq j,i \neq k, j \neq k. $$ Then $\mathcal{T}$ is strictly copositive if and only if $t_{iijj}\in\{0,1\}, i,j=1,2,3,i\neq j$ and there is at least two 1 in $\{t_{1122},t_{1133},t_{2233}\}$.
\end{theorem}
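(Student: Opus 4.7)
The plan is to invoke Theorem \ref{thm:31}: necessity will come from one explicit test vector, and sufficiency from a case analysis of the compact form
\[
\mathcal{T}x^4 \;=\; s^4 + 6\sum_{i<j}(t_{iijj}-1)x_i^2 x_j^2 - 24\, x_1 x_2 x_3 s, \qquad s := x_1+x_2+x_3,
\]
which follows from the expansion of $(x_1+x_2+x_3)^4 = \sum_i x_i^4 + 4\sum_{i\neq j} x_i^3 x_j + 6\sum_{i<j} x_i^2 x_j^2 + 12 x_1 x_2 x_3 s$ together with the given sign pattern $t_{iiii}=t_{iiij}=1$, $t_{iijk}=-1$.

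For the necessity, evaluating at $x=(1,1,1)^\top$ yields $\mathcal{T}x^4 = -9 + 6(t_{1122}+t_{1133}+t_{2233})$, so strict copositivity forces the integer inequality $t_{1122}+t_{1133}+t_{2233} \geq 2$. Under $t_{iijj}\in\{-1,0,1\}$ a short enumeration shows this happens precisely when every $t_{iijj} \in \{0,1\}$ and at least two of them equal $1$.

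For the sufficiency, the choice $y=(1,1,1)^\top$ already gives $\mathcal{T}y^4 > 0$, so by Theorem \ref{thm:31} it suffices to prove that $\mathcal{T}x^4=0$ on $\mathbb{R}_+^3$ forces $x=0$. By symmetry I assume $t_{1122}=t_{1133}=1$ and $t_{2233}\in\{0,1\}$. If $t_{2233}=1$, then $\mathcal{T}x^4 = s\bigl(s^3 - 24 x_1 x_2 x_3\bigr) \geq s^4/9$ by AM--GM (using $x_1 x_2 x_3 \leq s^3/27$), with equality only at $s=0$. If $t_{2233}=0$, substituting $a=x_1$, $u=x_2+x_3$, $v=x_2 x_3 \in [0, u^2/4]$ recasts $\mathcal{T}x^4 = (a+u)^4 - 6v\bigl(v+4a(a+u)\bigr)$, which is strictly decreasing in $v$; its minimum at $v=u^2/4$ equals $a^4 + 4a^3 u - 2 a u^3 + \tfrac{5}{8}u^4$. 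When $u=0$ this is $a^4$, vanishing only at $x=0$; when $u>0$, writing $t=a/u$ reduces positivity to $g(t) := t^4 + 4 t^3 - 2t + \tfrac58 > 0$ for every $t \in [0,\infty)$.

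The principal technical point is this univariate positivity of $g$; term-by-term AM--GM fails, because the extremal configurations for the two competing negative terms in $\mathcal{T}x^4$ do not coincide. My approach is a critical-point analysis. Since $g''(t) = 12 t^2 + 24 t \geq 0$ on $[0,\infty)$, $g'(t) = 4 t^3 + 12 t^2 - 2$ is nondecreasing and has a unique positive zero $t^*$ characterized by $2t^{*3}+6t^{*2}=1$; eliminating $t^{*3}$ and $t^{*4}$ via this relation simplifies $g(t^*)$ to $(9 - 12 t^* - 24 t^{*2})/8$, whose positivity is equivalent to $t^* < t_0 := (\sqrt 7 - 1)/4$, the positive root of $24 t^2 + 12 t - 9$. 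A direct calculation gives $2t_0^3 + 6t_0^2 - 1 = 7(3 - \sqrt 7)/16 > 0$, and since $t \mapsto 2t^3+6t^2$ is strictly increasing on $[0,\infty)$ this forces $t^* < t_0$, whence $g(t^*) > 0$. Combined with $g(0) = 5/8 > 0$ and $g(t) \to \infty$, this yields $g > 0$ on $[0,\infty)$ and closes the sufficiency.
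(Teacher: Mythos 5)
Your proposal is correct, and its skeleton coincides with the paper's: necessity from the test vector $(1,1,1)^\top$ giving $6(t_{1122}+t_{1133}+t_{2233})-9>0$, and sufficiency via Theorem \ref{thm:31} after reducing (WLOG $t_{1122}=t_{1133}=1$) to the positivity of $\hat{\mathcal{T}}x^4=(x_1+x_2+x_3)^4-6x_2^2x_3^2-24x_1x_2x_3(x_1+x_2+x_3)$ on $\mathbb{R}^3_+\setminus\{0\}$. The difference is that the paper simply asserts this last positivity ("using the similar proof technique of Theorem \ref{thm:33}, solve the equation \ldots to yield $x=0$"), whereas you actually prove it: you split off the easy case $t_{2233}=1$ by AM--GM, and for $t_{2233}=0$ you substitute $u=x_2+x_3$, $v=x_2x_3$, use monotonicity in $v$ to reduce to the boundary $v=u^2/4$, and then establish $g(t)=t^4+4t^3-2t+\tfrac58>0$ on $[0,\infty)$ by locating its unique critical point $t^*$ and comparing it with $(\sqrt7-1)/4$. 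I checked the algebra ($g(t^*)=(9-12t^*-24t^{*2})/8$ and $2t_0^3+6t_0^2-1=7(3-\sqrt7)/16>0$) and it is sound; your write-up therefore supplies exactly the verification the paper leaves to the reader, which is the only nontrivial step in the sufficiency direction.
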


\begin{proof}
	\textbf{Necessity}.  For $x=(1,1,1)^\top$,  we have
$$\aligned
\mathcal{T} x^4 =& x_1^4 + x_2^4 + x_3^4 +6t_{1122}x_1^2 x_2^2 + 6t_{1133}x_1^2 x_3^2 + 6t_{2233}x_2^2 x_3^2  \\
&+4x_1^3 x_2+ 4 x_1^3 x_3+ 4x_1 x_2^3 +4x_2^3 x_3 + 4x_1 x_3^3 + 4 x_2 x_3^3\\
&-12x_1^2 x_2 x_3 -12x_1 x_2^2 x_3 - 12 x_1 x_2 x_3^2 \\
=&6(t_{1122}+ t_{1133} + t_{2233})-9>0.
\endaligned$$
That is, $t_{1122}+t_{1133}+t_{2233}> \dfrac32$, which is equivalent to $t_{iijj}\in\{0,1\}, i,j=1,2,3,i\neq j$ and there is at least two 1 in $\{t_{1122},t_{1133},t_{2233}\}$.

\textbf{Sufficiency}. Without loss the generality, let $t_{1122}=t_{1133}=1, t_{2233}\in\{0,1\}$.
$$\aligned
\mathcal{T} x^4 &\geq x_1^4 + x_2^4 + x_3^4 + 4x_1^3 x_2 + 4x_1^3 x_2 + 4x_1 x_2^3 + 4x_2^3 x_3 + 4x_1 x_3^3 + 4x_2 x_3^3\\
&~~+6x_1^2 x_2^2 + 6 x_1^2 x_3^2 + 0 \cdot x_2^2 x_3^2 -12 x_1^2 x_2 x_3 - 12 x_1 x_2^2 x_3 - 12 x_1 x_2 x_3^2\\
&= (x_1+x_2+x_3)^4 - 6x_2^2 x_3^2-24x_1 x_2 x_3 ( x_1 +x_2+ x_3 ).
\endaligned$$
Using the similar proof technique of Theorem  \ref{thm:33},  solve the equation  $$\hat{\mathcal{T}} x^4=(x_1+x_2+x_3)^4 - 6x_2^2 x_3^2- 24x_1 x_2 x_3 ( x_1 +x_2+ x_3 )=0$$ to yield $x=0$ in $\mathbb{R}^3_+$.
So, $\mathcal{T}$ is strictly copositive.
\end{proof}

\end{document}